\newcommand{\norm}[1]{\left\lVert#1\right\rVert}
\begin{document}

\setlist{noitemsep}  

\title{T-statistic for Autoregressive process}

\author{Eric Benhamou \thanks{A.I. SQUARE CONNECT, 35 Boulevard d'Inkermann 92200 Neuilly sur Seine, France}  
\textsuperscript{,} 
\thanks{LAMSADE, Université Paris Dauphine, Place du Maréchal de Lattre de Tassigny,75016 Paris, France} 
\textsuperscript{,} 
\thanks{E-mail: eric.benhamou@aisquareconnect.com, eric.benhamou@dauphine.eu}
}

\date{}              

\singlespacing

\maketitle

\vspace{-.2in}
\begin{abstract}
\noindent In this paper, we discuss the distribution of the t-statistic under the assumption of normal autoregressive distribution for the underlying discrete time process.
This result generalizes the classical result of the traditional t-distribution where the underlying discrete time process follows an uncorrelated normal distribution. However, for AR(1), the underlying process is correlated. All traditional results break down and the resulting t-statistic is a new distribution that converges asymptotically to a normal. We give an explicit formula for this new distribution obtained as the ratio of two dependent distribution (a normal and the distribution of the norm of another independent normal distribution). 
We also provide a modified statistic that is follows a non central t-distribution. Its derivation comes from finding an orthogonal basis for the the initial circulant Toeplitz covariance matrix. Our findings are consistent with the asymptotic distribution for the t-statistic derived for the asympotic case of large number of observations or zero correlation.
This exact finding of this distribution has applications in multiple fields and in particular provides a way to derive the exact distribution of the Sharpe ratio under normal AR(1) assumptions.
\end{abstract}

\medskip

\noindent \textit{AMS 1991 subject classification:} 62E10, 62E15

\medskip
\noindent \textit{Keywords}: t-Student, Auto regressive process, Toeplitz matrix, circulant matrix, non centered Student distribution

\thispagestyle{empty}

\clearpage

\onehalfspacing
\setcounter{footnote}{0}
\renewcommand{\thefootnote}{\arabic{footnote}}
\setcounter{page}{1}


\section{Introduction}
Let $X_1, \ldots, X_n$ be a random sample from a cumulative distribution function (cdf) $F(·)$ with a constant mean $\mu$ and let define the following statistics referred to as the t-statistic
\begin{equation}\label{tstatistic} 
T_n = T(X_n) = \frac{\sqrt{n} ( \bar X_n - \mu ) }{s_n}
\end{equation}
where $\bar X_n $ is the empirical mean, $s_n^2$ the empirical Bessel corrected empirical variance, and $X_n$ the regular full history of the random sample defined by: 
\begin{equation}
\bar{X}_n =\frac{1}{n}\sum_{i=1}^{n}X_i, \quad s_n^2 = \frac{1}{n-1}\sum_{i=1}^{n}(X_i - \bar{X}_n)^2, \quad X_n = (X_1, \ldots, X_n)^T
\end{equation}

It is well known that if the sample comes from a normal distribution, $N(0, \sigma)$, $T_n$ has the Student t-distribution with $n - 1$ degrees of freedom. The proof is quite simple (we provide a few in the appendix section in  \ref{t_student}). If the variables $(X_i)_{i=1..n}$ have a mean $\mu$ non equal to zero, the distribution is referred to as a non-central t-distribution with non centrality parameter given by 
\begin{equation}
\eta = \sqrt n \quad \frac{\mu}{\sigma}
\end{equation}
Extension to weaker condition for the t-statistics has been widely studied. 

Mauldon \cite{Mauldon_1956} raised the question for which pdfs the t-statistic as defined by \ref{tstatistic} is t-distributed with $d - 1$ degrees of freedom. Indeed, this characterization problem can be generalized to the one of finding all the pdfs for which a certain statistic possesses the property which is a characteristic for these pdfs. \cite{Kagan_1973}, \cite{Bondesson_1974} and \cite{Bondesson_1983} to cite a few tackled Mauldon’s
problem. \cite{Bondesson_1983} proved the necessary and sufficient condition for a t-statistic to have Student’s t-distribution with $d - 1$ degrees
of freedom for all sample sizes is the normality of the underlying distribution. It is not necessary that $X_1,...,X_n$  is an independent sample. Indeed consider
$X_1,...,X_n$  as a random vector $X_n = (X_1,...,X_n)^T$ each component of which having the
same marginal distribution function, $F(·)$. \cite{Efron_1969} has pointed out that the weaker condition
of symmetry can replace the normality assumption. Later, \cite{Fang_2001}  showed that if the vector $X_n$ has a spherical distribution, then the
t-statistic has a t-distribution. A natural question that gives birth to this paper was to check if the Student resulting distribution is conserved in the case of an underlying process $(X_i)_{i=1,\ldots }$ following an AR(1) process. This question and its answer has more implication than a simple theoretical problem. Indeed, if by any chance, one may want to test the statistical significance of a coefficient in a regression, one may do a t-test and rely upon the fact that the resulting distribution is a Student one. If by any chance, the observations are not independent but suffer from auto-correlation, the building blocks supporting the test break down. Surprisingly, as this problem is not easy, there has been few research on this problem. Even if this is related to the Dickey Fuller statistic (whose distribution is not closed form and needs to be computed by Monte Carlo simulation), this is not the same statistics. \cite{Mikusheva_2015} applied an Edgeworth expansion precisely to the Dickey Fuller statistics but not to the original t-statistic. The neighboring Dickey Fuller statistic has the great advantage to be equal to the ratio of two known continuous time stochastic process making the problem easier. In the sequel, we will first review the problem, comment on the particular case of zero correlation and the resulting consequence of the t-statistic. We will emphasize the difference and challenge when suddenly, the underlying observations are not any more independent. We will study the numerator and denominator of the t-statistic and derive their underlying distribution. We will in particular prove that it is only in the case of normal noise in the underlying AR(1) process, that the numerator and denominator are independent. We will then provide a few approximation for this statistic and conclude.

\section{AR(1) process}
The assumptions that the underlying process (or observations) $(X_i)_{i=1,\ldots; n }$ follows an AR(1) writes : 
\begin{equation}\label{AR_assumptions}
\left\{ {
\begin{array}{l l l l }
X_t 			&  = & \mu + \epsilon_t 				& \quad t \geq 1 ; \\
\epsilon_t 	& = & \rho \epsilon_{t-1} + \sigma v_t 	& \quad t \geq 2 ; 
\end{array} } \right.
\end{equation}
where $v_t$ is an independent white noise processes (i.i.d. variables with zero mean and unit constant variance). To assume a stationary process, we impose
\begin{equation}
\lvert {\rho} \rvert  \leq 1
\end{equation}

It is easy to check that equation \ref{AR_assumptions} is equivalent to
\begin{equation}
\begin{array}{l l l l }
X_t 			&  = & \mu + \rho ( X_{t-1} - \mu )+ \sigma v_t 	& \quad t \geq 2 ; 
\end{array}
\end{equation}

We can also easily check that the variance and covariance of the returns are given by
\begin{equation}\label{moment2}
\begin{array}{l l l l }
V(X_t) & = & \frac {\sigma^2} {1-\rho^2} 								\; \;\;\;\; \text{for} \;  t \geq 1  \\
Cov(X_t, X_u ) & = & \frac {\sigma^2 \rho^{ \lvert {t -u} \rvert  }} {1-\rho^2} 	\; \;\;\;\; \text{for} \;  t,u \geq 1 
\end{array}
\end{equation}

Both expressions in \ref{moment2} are independent of time $t$ and the covariance only depends on $\lvert {t -u} \rvert$ implying that $X_t$ is a stationary process.

\subsection{Case of Normal errors}
If in addition, we assume that $v_t$ are distributed according to a normal distribution, we can fully characterize the distribution of $X$ and rewrite our model in reduced matrix formulations as follows:
\begin{equation}\label{reduced_matrix}
X = \left(  \begin{array}{c} X_1 \\ \vdots \\ X_n \end{array} \right)  = \mu \cdot \mathbbm{1}_n  + \sigma \cdot  \epsilon = \mu \left(  \begin{array}{c} 1 \\ \vdots \\ 1 \end{array} \right) + \sigma \left(  \begin{array}{c} \epsilon_1 \\ \vdots \\ \epsilon_n  \end{array} \right) 
\end{equation}
where $\epsilon \sim N \left( 0, \Omega = \left( \frac{ \rho ^{ | i - 1 | }}{1-\rho^2}\right)_{ij} \right)$, 
hence, $X \sim N \left( \mu \cdot  \mathbbm{1}_n,  \sigma^2 \Omega \right)$. 

The $\Omega$ matrix is a Toeplitz circulant matrix defined as

\begin{equation}\label{Omega}
\Omega = \frac{1}{1 - \rho^2}   \left( 
\begin{array}{l l l l l}
{1}					& {\rho}		& \ldots 		&  {\rho^{n-2}} 	& {\rho^{n-1}}	\\
{\rho}  				&  1 		& \ldots 		& {\rho^{n-3}} 	&  {\rho^{n-2}}	\\
\vdots 				& \vdots 		& \ddots 		& \vdots 		  	& \vdots 			\\
{\rho^{n-2}} 	&  {\rho^{n-3}} 		& \ldots 		& 1				& {\rho}			\\
{\rho^{n-1}} 	&  {\rho^{n-2}} 		& \ldots 		& {\rho}			&{1}	
\end{array}
\right) = M^T M
\end{equation}

Its Chlolesky decomposition is given by
\begin{eqnarray}\label{OmegaSqrt}
M &= &\frac{1}{\sqrt{ 1- \rho^2} }  \left( 
\begin{array}{l l l l l}
{1}			& 0							& \ldots 		&  0					& 0 \\
{\rho}		& \sqrt{1 -\rho^2}				& \ldots 		& 0			 		& 0	\\
\vdots 		& \vdots 						& \ddots 		& \vdots 				& \vdots 	\\
{\rho^{n-2}} 	&  \rho^{n-3} \sqrt{1 -\rho^2} 	& \ldots 		& \sqrt{1 -\rho^2} 	& 0 \\
 {\rho^{n-1}}	&  \rho^{n-2} \sqrt{1 -\rho^2} 	& \ldots 		& \rho \sqrt{1 -\rho^2}	& \sqrt{1 -\rho^2}	
\end{array}
\right)  
\end{eqnarray}
It is worth splitting $M$ into $I_n$ and another matrix as follows:
\begin{eqnarray}
M &= &\left( 
\begin{array}{l l l l l}
1+\frac{1-\sqrt{1-\rho^2}}{\sqrt{1-\rho^2}}	& 0				& \ldots 		&  0			& 0 \\
\frac{\rho}{\sqrt{1-\rho^2}}					& 1				& \ldots 		& 0			& 0	\\
\vdots 									& \vdots 			& \ddots 		& \vdots 		& \vdots 	\\
\frac{\rho^{n-2}}{\sqrt{1-\rho^2}}			&  \rho^{n-3} 	& \ldots 		& 1			& 0 \\
\frac{\rho^{n-1}}{\sqrt{1-\rho^2}}			&  \rho^{n-2} 	& \ldots 		& \rho 		& 1
\end{array}
\right) = \underbrace{I_n + \left( 
\begin{array}{l l l l l}
\frac{1-\sqrt{1-\rho^2}}{\sqrt{1-\rho^2}}		& 0				& \ldots 		& 0			& 0 \\
\frac{\rho}{\sqrt{1-\rho^2}}					& 0				& \ldots 		& 0			& 0	\\
\vdots 									& \vdots 			& \ddots 		& \vdots 		& \vdots 	\\
\frac{\rho^{n-2}}{\sqrt{1-\rho^2}}			&  \rho^{n-3} 	& \ldots 		& 0			& 0 \\
\frac{\rho^{n-1}}{\sqrt{1-\rho^2}}			&  \rho^{n-2} 	& \ldots 		& \rho 		&  0
\end{array}
\right) }_{I_n \; + \; \hspace{2.3cm}  N \hspace{3cm}}
\end{eqnarray}

The inverse of $\Omega$ is given by
\begin{equation}\label{OmegaInv}
A = \Omega^{-1} = \left( 
\begin{array}{l l l l l}
{1}			& {-\rho}			& \ldots 		& 0				& 0	\\
{-\rho}		& {1+ \rho ^2 } 	& \ldots 		& 0			 	& 0	\\
\vdots 		& \vdots 			& \ddots 		& \vdots 			& \vdots 	\\
0			&  0			 	& \ldots 		& {1+ \rho ^2 } 	& {-\rho}	\\
0			&  0			 	& \ldots 		&   {-\rho}		& 1 	
\end{array}
\right) = L^{T} L
\end{equation}

Its Cholesky decomposition $L$ is given by
\begin{equation}\label{OmegaInvSqrt}
L = \left( 
\begin{array}{l l l l l}
\sqrt{1 -\rho^2}	& 0				& \ldots 		& 0				& 0	\\
{-\rho}			& 1			 	& \ldots 		& 0			 	& 0	\\
\vdots 			& \vdots 			& \ddots 		& \vdots 			& \vdots 	\\
0				&  0			 	& \ldots 		& 1				& 0 \\
0				&  0			 	& \ldots 		& {-\rho}			& 1
\end{array}
\right)
\end{equation}

Notice in the various matrix the dissymmetry between the first term and the rest. This shows up  for instance in the first diagonal term of $L$ which is $\sqrt{1 -\rho^2}$, while all other diagonal terms are equal to 1. Similarly, in the matrix $N$, we can notice that the first column is quite different from the other ones as it is a fraction over $\sqrt{1 -\rho^2}$.

\subsection{T-statistics issue}
The T-statistic given by equation \ref{tstatistic} is not easy to compute. For the numerator, we have that $ \bar X_n - \mu$ follows a normal distribution. The proof is immediate as $\bar X_n $ is a linear combination of the Gaussian vector generated by the AR(1) process. We have $\bar X_n = \frac{1}{n} \mathbbm{1}_n \cdot X$. It follows that $\bar X_n  \sim N( \mu,  \frac{ \sigma^2 }{n^2} \mathbbm{1}_n^T \cdot \Omega \cdot \mathbbm{1}_n )$ (for a quick proof of the fact that any linear combination of a Gaussian vector is normal, see \ref{corr_normal}). In section \ref{Computations}, we will come back on the exact computation of the characteristics of the distribution of the numerator and denominator as this will be useful in the rest of the paper.

As for the denominator, for a non null correlation $\rho$, the distribution of $s_n^2$ is not a known distribution. 
\\\\
The distributions of the variables $\left(Y_i=X_i - \bar{X}_n \right)_{i=2, \ldots, n}$ are normal given by
 $ Y_ i \sim N( 0, \sigma_{Y_i} )$ with 
$\sigma_{Y_i}  = (\delta_i - \frac 1 n  \mathbbm{1}_n)^T \cdot  \Omega \cdot  (\delta_i - \frac 1 n  \mathbbm{1}_n)$. 
where $ \delta_i  = \underbrace{\left( 0, 0, \ldots, 1, \ldots, 0, 0 \right)^T }_{\text{1 at ith position}}$

Hence the square of these normal variables $Z_i=Y_i ^2$ is the sum of Gamma distributions. However, we cannot obtain a closed form for the distribution $s_n^2$ as the variance of the different terms are different and the terms are neither independent. If the correlation is null, and only in this specific case, we can apply the Cochran’s Theorem to prove that $s_n^2$ follows a Chi square distribution with $n-1$ degree of freedom. However, in the general case, we need to rely on approximation that will be presented in the rest of the paper.

Another interesting result is to use the Cholesky decomposition of the inverse of the covariance matrix of our process to infer a modified t-statistic that has now independent terms and is defined as follows

Let us take the modified process defined by
\begin{equation}
U = L  X
\end   {equation}
The variables $U$ is distributed according to a normal $U  \sim N( \mu L  \mathbbm{1},  Id_n )$. We can compute the modified T-statistic $\tilde{T}_n$ on $U$ as follows:

\begin{equation}\label{tstatistic2} 
\tilde{T}_n = \frac{\sqrt{n} ( \bar U_n - \mu ) }{\tilde{s}_n}
\end{equation}
where 
\begin{equation}
\bar{U}_n =\frac{1}{n}\sum_{i=1}^{n}U_i, \quad \tilde{s}_n^2 = \frac{1}{n-1}\sum_{i=1}^{n}(U_i - \bar{U}_n)^2
\end{equation}

In this specific case, the distribution of $\tilde{T}_n$ is a Student distribution of degree $n-1$. We will now work on the numerator and denominator of the T-statistic in the specific case of AR(1) with a non null correlation $\rho$.

\section{Expectation and variance of numerator and denominator}\label{Computations}
The numerator of the T-statistic writes
\begin{eqnarray}
\sqrt{n} (\bar{X}_n - \mu) =\frac{1}{\sqrt{n}}\sum_{i=1}^{n}( X_i - \mu),
\end{eqnarray}

Its expectation is null as each term is of zero expectation. Its variance is given by

\begin{lemma}\label{lemma_var_num}
\begin{eqnarray}
\text{Var}(\sqrt n (\bar{X}_n-\mu)) &=&  \frac{ \sigma^2}{1 - \rho^2} \left[ \frac{ 1 + \rho}{1-\rho} - \frac{2 \rho (1 -  \rho^{n}) }{n(1-\rho)^2}  \right]  \label{lemma_var_num_eq1} \\
& =&  \frac{ \sigma^2}{ (1 - \rho)^2} \left[ 1 - \frac{2 \rho (1 -  \rho^{n}) }{n(1-\rho)(1+\rho)}  \right]   \label{lemma_var_num_eq2}
\end{eqnarray}
\end{lemma}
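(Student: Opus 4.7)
The plan is to compute the variance directly by expanding the sum and using the stationary covariance formula from equation (\ref{moment2}). Since $\sqrt{n}(\bar{X}_n - \mu) = \frac{1}{\sqrt{n}} \sum_{i=1}^n (X_i - \mu)$, bilinearity of the covariance gives
\begin{equation*}
\text{Var}(\sqrt{n}(\bar{X}_n - \mu)) = \frac{1}{n} \sum_{i=1}^n \sum_{j=1}^n \text{Cov}(X_i, X_j) = \frac{\sigma^2}{n(1-\rho^2)} \sum_{i=1}^n \sum_{j=1}^n \rho^{|i-j|}.
\end{equation*}
The whole problem thus reduces to evaluating the double sum $S_n := \sum_{i,j=1}^n \rho^{|i-j|}$ in closed form.

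Next, I would reorganize $S_n$ by grouping terms according to the gap $k = |i - j|$. For each $k \in \{1, \dots, n-1\}$ there are exactly $2(n-k)$ ordered pairs with $|i-j| = k$, while $k = 0$ contributes $n$ diagonal terms. This yields
\begin{equation*}
S_n = n + 2 \sum_{k=1}^{n-1} (n-k)\rho^k = n + 2n \sum_{k=1}^{n-1} \rho^k - 2 \sum_{k=1}^{n-1} k \rho^k.
\end{equation*}
The two remaining sums are a geometric series and its $\rho$-derivative. Substituting $\sum_{k=1}^{n-1} \rho^k = (\rho - \rho^n)/(1-\rho)$ and $\sum_{k=1}^{n-1} k\rho^k = \rho(1 - n\rho^{n-1} + (n-1)\rho^n)/(1-\rho)^2$, then placing everything on the common denominator $(1-\rho)^2$, a cancellation of the terms linear in $n\rho^2$ and $n\rho^n$ leaves the compact expression
\begin{equation*}
S_n = \frac{n(1-\rho^2) - 2\rho(1-\rho^n)}{(1-\rho)^2}.
\end{equation*}

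Plugging this back into the variance formula gives $\text{Var}(\sqrt{n}(\bar{X}_n - \mu)) = \frac{\sigma^2}{(1-\rho)^2}\bigl[1 - \frac{2\rho(1-\rho^n)}{n(1-\rho)(1+\rho)}\bigr]$, which is exactly equation (\ref{lemma_var_num_eq2}). To obtain the alternative form (\ref{lemma_var_num_eq1}), I would simply pull out the common prefactor differently by writing $\frac{1}{(1-\rho)^2} = \frac{1}{1-\rho^2} \cdot \frac{1+\rho}{1-\rho}$, which distributes the $(1+\rho)$ into both terms inside the bracket and yields the equivalent expression.

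I do not anticipate any serious obstacle here: the argument is essentially a bookkeeping exercise. The only step that demands care is the algebraic simplification after substituting the arithmetic-geometric series formula, since the expression involves four groups of terms ($n$, $n\rho^k$, $\rho^k$, $\rho^{n+k}$) that must combine cleanly; a single sign error would spoil the final cancellation. Once that simplification is done correctly the two advertised forms follow by a trivial rearrangement.
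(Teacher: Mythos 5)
Your proof is correct and follows essentially the same route as the paper: both expand $\text{Var}(\sqrt{n}(\bar X_n-\mu))$ as the double sum of stationary covariances $\frac{\sigma^2}{n(1-\rho^2)}\sum_{i,j}\rho^{|i-j|}$, group the off-diagonal terms by the gap (your $2\sum_{k=1}^{n-1}(n-k)\rho^k$ is the paper's $2\sum_{i}(n-i)\rho^i$), and evaluate them with the geometric and arithmetic-geometric series identities before rearranging into the two stated forms. No gap to report.
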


\begin{proof}: See \ref{proof_var_num}
\end{proof}

The proposition \ref{lemma_var_num} is interesting as it states that the sample mean variance converges to $\cfrac{ \sigma^2}{(1 - \rho)^2}$  \quad for large $n$. It is useful to keep the two forms of the variance. The first one (equation (\ref{lemma_var_num_eq1})) is useful in following computation as it shares the denominator term $1 - \rho^2$. The second form (equation \ref{lemma_var_num_eq2}) gives the asymptotic form.

The denominator writes:
\begin{eqnarray}
s_n = \sqrt{ \frac{1}{n-1}\sum_{i=1}^{n}(X_i - \bar{X}_n)^2},
\end{eqnarray}

In the following, we denote by $Y_i = X_i - \mu$ the zero mean variable and work with these variables to make computation easier. We also write $Y_i^{\perp}$ the variable orthogonal to $Y_i$ whose variance (we sometimes refer to it as its squared norm to make notation easier) is equal to the one of $Y_i$ : $\norm{Y_i}^2 =  \norm{Y_i^{\perp}}^2$. To see the impact of correlation, we can write for any $j>i$, $Y_j=\rho^{j-i} Y_i + \sqrt{ 1 - \rho^{2(j-i)} } Y_i^{\perp}$. 

As studying this denominator is not easy because of the presence of the square root, it is easier to investigate the properties of its squared given by

\begin{eqnarray}
s_n^2 = \frac{\sum_{i=1}^{n}(Y_i - \bar{Y}_n)^2}{n-1}= \frac{\sum_{i=1}^{n}Y_i^2 - n\bar{Y}_n^2}{n-1}
\end{eqnarray}

We have that the mean of $\bar{Y}_n$ is zero while proposition \ref{lemma_var_num} gives its variance :
\begin{eqnarray}
\text{Var}(\bar{Y}_n) & =&   \frac{ \sigma^2}{n (1 - \rho^2)} \left[ \frac{ 1 + \rho}{1-\rho} - \frac{2 \rho (1 -  \rho^{n}) }{n(1-\rho)^2}  \right]   =   \frac{ \sigma^2}{ n (1 - \rho)^2} \left[ 1 - \frac{2 \rho (1 -  \rho^{n}) }{n(1-\rho)(1+\rho)}  \right] 
\end{eqnarray}

\begin{lemma}\label{lemma_covar}
The covariance between $\bar{Y}_n$ and each stochastic variable $Y_j$ is useful and given by
\begin{eqnarray}\label{lemma_covar_eq1}
\text{Cov}(\bar{Y}_n, Y_j) =  \frac{ \sigma^2}{ n (1 - \rho^2)} \left[ \frac{ 1 + \rho -\rho^{n+1-j}-\rho^{j}} {1-\rho}   \right]  
\end{eqnarray}
In addition, we have a few remarkable identities
\begin{eqnarray}\label{lemma_covar_eq2}
\sum_{j=1}^n \text{Cov}(\bar{Y}_n, Y_j) =  \frac{ \sigma^2}{ (1 - \rho^2)} \left[ \frac{ 1 + \rho}{1-\rho} - \frac{2 \rho (1 -  \rho^{n}) }{n(1-\rho)^2}  \right]
\end{eqnarray}

\begin{eqnarray}\label{lemma_covar_eq3}
\sum_{j=1}^n \left( \text{Cov}(\bar{Y}_n, Y_j) \right)^2=  \frac{ \sigma^4}{  (1 - \rho^2)^2}  \left[ \frac{ (1+\rho)^2 + 2 \rho^{n+1} } { (1-\rho)^2 } \frac{1}{n} 
-  \frac{4 (1+\rho)^2 \rho (1-\rho^n) - 2 \rho^2 (1-\rho^{2n})}  {(1-\rho)^2 (1-\rho^2)} \frac{1}{n^2} \right]
\end{eqnarray}
\end{lemma}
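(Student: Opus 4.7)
The plan is to reduce everything to known geometric sums via the identity $\text{Cov}(Y_i,Y_j)=\sigma^2\rho^{|i-j|}/(1-\rho^2)$ taken from equation (\ref{moment2}). For the first identity, I would expand by bilinearity,
\begin{equation*}
\text{Cov}(\bar Y_n,Y_j)=\frac{1}{n}\sum_{i=1}^n \text{Cov}(Y_i,Y_j)=\frac{\sigma^2}{n(1-\rho^2)}\sum_{i=1}^n \rho^{|i-j|},
\end{equation*}
and then split the inner sum at $i=j$ into the backward block $\sum_{k=1}^{j-1}\rho^k$ and the forward block $\sum_{k=1}^{n-j}\rho^k$ (plus the unit term at $i=j$). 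Two applications of the geometric series formula then collapse the sum to $(1+\rho-\rho^{j}-\rho^{n+1-j})/(1-\rho)$, which is exactly (\ref{lemma_covar_eq1}).

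For the second identity I would sum the closed form from step one over $j=1,\dots,n$. The critical observation is the reindexing $\sum_{j=1}^n \rho^{n+1-j}=\sum_{j=1}^n \rho^j=\rho(1-\rho^n)/(1-\rho)$, so the two exponential pieces merge into a single factor of $2\rho(1-\rho^n)/(1-\rho)$ while the constant piece contributes $n(1+\rho)$. After factoring out $\sigma^2/[n(1-\rho^2)(1-\rho)]$ and using $(1-\rho^2)=(1-\rho)(1+\rho)$ to clean up, one recovers the variance formula of Lemma \ref{lemma_var_num} divided appropriately; this matches (\ref{lemma_covar_eq2}).

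The third identity is where the bulk of the work lies. I would square $(1+\rho-\rho^{j}-\rho^{n+1-j})$ and expand into six terms: the square $(1+\rho)^2$, the two cross pieces $-2(1+\rho)\rho^j$ and $-2(1+\rho)\rho^{n+1-j}$, the pure squares $\rho^{2j}$ and $\rho^{2(n+1-j)}$, and the cross product $2\rho^{j}\rho^{n+1-j}=2\rho^{n+1}$. Summing over $j$, the last of these produces the anomalous $2n\rho^{n+1}$ contribution that sits alongside $n(1+\rho)^2$, while $\sum_{j=1}^n \rho^{2j}=\rho^2(1-\rho^{2n})/(1-\rho^2)$ produces the $(1-\rho^{2n})$ factor. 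The main obstacle will be the final consolidation: after factoring $\sigma^4/[n^2(1-\rho^2)^2(1-\rho)^2]$, one must convert the cross term's denominator $(1-\rho)^3$ into the stated $(1-\rho)^2(1-\rho^2)$ via the identity
\begin{equation*}
\frac{1+\rho}{(1-\rho)^3}=\frac{(1+\rho)^2}{(1-\rho)^2(1-\rho^2)},
\end{equation*}
which is what upgrades the bare $(1+\rho)$ to the $(1+\rho)^2$ appearing inside the bracket of (\ref{lemma_covar_eq3}). Once this rewriting is done, grouping by $1/n$ and $1/n^2$ terms yields the stated form directly.
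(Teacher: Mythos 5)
Your proposal is correct and follows essentially the same route as the paper: bilinearity plus splitting the geometric sum at $i=j$ for (\ref{lemma_covar_eq1}), and squaring termwise—with the $2\rho^{n+1}$ cross term and the rewriting $\frac{1+\rho}{(1-\rho)^3}=\frac{(1+\rho)^2}{(1-\rho)^2(1-\rho^2)}$—for (\ref{lemma_covar_eq3}). The only cosmetic difference is in (\ref{lemma_covar_eq2}), where the paper simply observes $\sum_{j=1}^n \text{Cov}(\bar Y_n,Y_j)=n\,\text{Var}(\bar Y_n)$ and invokes Lemma \ref{lemma_var_num}, whereas you re-sum the closed form directly; both are equally valid.
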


\begin{proof}: See \ref{proof_covar}
\end{proof}

We can now compute easily the expectation and variance of the denominators as follows

\begin{proposition}\label{prop_denom_expectation}
The expectation of $s_n^2$ is given by:
\begin{eqnarray}
\mathbb{E} {s_n^2} = \frac{ \sigma^2}{1 - \rho^2}   \left( 1 - \frac{ 2 \rho }{(1-\rho) (n-1)} + \frac{ 2 \rho( 1-\rho^{n}) }{n (n-1) (1-\rho)^2}  \right)  
\end{eqnarray}
\end{proposition}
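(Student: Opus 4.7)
The plan is to exploit the decomposition
\[
s_n^2 \;=\; \frac{1}{n-1}\Bigl(\sum_{i=1}^n Y_i^2 \;-\; n\,\bar Y_n^2\Bigr),
\]
already written immediately above the statement, so that taking expectations reduces the problem to two quantities already computed in the paper: the common second moment $\mathbb{E}[Y_i^2]$ and the mean square $\mathbb{E}[\bar Y_n^2]$.

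First I would use stationarity: from equation (\ref{moment2}) each $Y_i = X_i - \mu$ has zero mean and variance $\sigma^2/(1-\rho^2)$, hence
\[
\sum_{i=1}^n \mathbb{E}[Y_i^2] \;=\; \frac{n\,\sigma^2}{1-\rho^2}.
\]
Next, since $\mathbb{E}[\bar Y_n]=0$, the term $\mathbb{E}[\bar Y_n^2]$ is exactly $\operatorname{Var}(\bar Y_n)$, which by Lemma \ref{lemma_var_num} (applied with the $1/n$ factor coming from $\bar Y_n=\tfrac{1}{\sqrt n}\cdot\tfrac{1}{\sqrt n}\sum Y_i$) equals
\[
\operatorname{Var}(\bar Y_n)\;=\;\frac{\sigma^2}{n(1-\rho^2)}\left[\frac{1+\rho}{1-\rho}-\frac{2\rho(1-\rho^n)}{n(1-\rho)^2}\right].
\]

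Substituting both pieces into $\mathbb{E}[s_n^2]=\tfrac{1}{n-1}\bigl(\sum\mathbb{E}[Y_i^2]-n\mathbb{E}[\bar Y_n^2]\bigr)$ yields
\[
\mathbb{E}[s_n^2]\;=\;\frac{\sigma^2}{(n-1)(1-\rho^2)}\left[\,n \;-\; \frac{1+\rho}{1-\rho}\;+\;\frac{2\rho(1-\rho^n)}{n(1-\rho)^2}\right].
\]
The only remaining work is the identity $n-\tfrac{1+\rho}{1-\rho}=(n-1)-\tfrac{2\rho}{1-\rho}$, verified by putting both sides over the common denominator $1-\rho$. Pulling the factor $(n-1)$ out of the bracket then gives exactly
\[
\mathbb{E}[s_n^2]\;=\;\frac{\sigma^2}{1-\rho^2}\left(1-\frac{2\rho}{(1-\rho)(n-1)}+\frac{2\rho(1-\rho^n)}{n(n-1)(1-\rho)^2}\right),
\]
which is the claimed identity.

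There is no real obstacle here: the proof is a direct application of linearity of expectation combined with Lemma \ref{lemma_var_num}. The mildly delicate step, and the one worth spelling out carefully in the write-up, is the final algebraic rearrangement that regroups the terms so that the characteristic $(n-1)$ Bessel-correction factor appears in the denominator and the formula matches the stated form; the $1/(n-1)$ normalization is precisely what turns the raw combination into the compact expression in the proposition.
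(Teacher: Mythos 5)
Your proposal is correct and follows essentially the same route as the paper's own proof: decompose $s_n^2=\frac{1}{n-1}\bigl(\sum_{i=1}^n Y_i^2-n\bar Y_n^2\bigr)$, apply stationarity for $\mathbb{E}[Y_i^2]$ and Lemma \ref{lemma_var_num} for $\mathbb{E}[n\bar Y_n^2]$, then rearrange using $n-\frac{1+\rho}{1-\rho}=(n-1)-\frac{2\rho}{1-\rho}$. No gaps to report.
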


\begin{proof}: See \ref{proof_denom_expectation}
\end{proof}

\begin{proposition}\label{second_moment_denom}
The second moment of $s_n^2$ is given by:
\begin{align}
\mathbb{E}[s_n^4] & = \frac{\sigma^4}{(1-\rho^2)^2} \frac{1}{(n-1)^2} \left[ n^2-1 + \rho \left(n A_1 + A_2 + \frac{1}{n} A_3 + \frac{1}{n^2} A_4 \right) \right]
\end{align}
with 
\begin{align}
A_1 & = \frac{-4}{1-\rho^2} \\
A_2 & = \frac{- 2 \left(3 + 9 \rho + 11 \rho^2 + 3 \rho^3 + 6 \rho^n + 12 \rho^{n+1} + 6 \rho^{n+2}-2 \rho^{2n+2}\right)} {(1-\rho^2)^2} \\
A_3 & = \frac{ 4 (1 - \rho^n) (1 - 3 \rho + 4 \rho^2 - 8 \rho^{n+1})}{(1 -r)^3 (1 + r)} \\
A_4 & = \frac{12  \rho (1-\rho^{n})^2}{(1-\rho)^4} 
\end{align}
\end{proposition}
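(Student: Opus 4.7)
The plan is to recognize $s_n^2$ as a quadratic form in the centered Gaussian vector $Y = X - \mu\,\mathbbm{1}_n \sim N(0, \sigma^2 \Omega)$ and invoke the standard Isserlis/Wick formula for its second moment. Writing $(n-1)\, s_n^2 = \sum_{i=1}^n Y_i^2 - n\,\bar Y_n^2 = Y^T P Y$ with $P = I_n - \frac{1}{n}\,\mathbbm{1}_n \mathbbm{1}_n^T$ the centering projector, the fourth-order Gaussian moment identity for a centered normal vector yields
\[
\mathbb{E}\bigl[(Y^T P Y)^2\bigr] = \bigl(\sigma^2\,\text{tr}(P\Omega)\bigr)^2 + 2\sigma^4\,\text{tr}\bigl((P\Omega)^2\bigr),
\]
so that $\mathbb{E}[s_n^4] = \sigma^4 (n-1)^{-2}\bigl[(\text{tr}(P\Omega))^2 + 2\,\text{tr}((P\Omega)^2)\bigr]$. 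The first trace is already supplied by Proposition \ref{prop_denom_expectation} via the identity $\mathbb{E}[s_n^2] = \sigma^2\,\text{tr}(P\Omega)/(n-1)$, so the substantive task is the second trace.

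For $\text{tr}((P\Omega)^2)$, I would expand
\[
\text{tr}\bigl((P\Omega)^2\bigr) = \text{tr}(\Omega^2) - \frac{2}{n}\,\mathbbm{1}_n^T \Omega^2 \mathbbm{1}_n + \frac{1}{n^2}\bigl(\mathbbm{1}_n^T \Omega \mathbbm{1}_n\bigr)^2,
\]
and handle each piece separately. The factor $\mathbbm{1}_n^T \Omega \mathbbm{1}_n$ equals $n\,\text{Var}(\sqrt n(\bar X_n - \mu))/\sigma^2$, known in closed form from Lemma \ref{lemma_var_num}. Writing $\mathbbm{1}_n^T \Omega^2 \mathbbm{1}_n = \sum_{j=1}^n \bigl(\sum_{i=1}^n \Omega_{ij}\bigr)^2$ identifies it, up to the prefactor $n^2/\sigma^4$, with $\sum_{j=1}^n \text{Cov}(\bar Y_n, Y_j)^2$, which is exactly identity (\ref{lemma_covar_eq3}) of Lemma \ref{lemma_covar}. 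Finally, $\text{tr}(\Omega^2) = (1-\rho^2)^{-2}\sum_{i,j=1}^n \rho^{2|i-j|}$ is a standard Toeplitz double geometric sum that reduces in closed form to a rational function of $\rho$ and $\rho^{2n}$. Plugging these three explicit expressions into the trace expansion gives $\text{tr}((P\Omega)^2)$.

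A direct alternative, which I would run in parallel as a consistency check, is to expand $(n-1)^2 s_n^4 = (\sum_i Y_i^2)^2 - 2n\,\bar Y_n^2 \sum_i Y_i^2 + n^2\, \bar Y_n^4$ and apply Isserlis term by term, using $\mathbb{E}[Y_i^2 \bar Y_n^2] = \text{Var}(Y_i)\,\text{Var}(\bar Y_n) + 2\,\text{Cov}(Y_i,\bar Y_n)^2$ (again invoking (\ref{lemma_covar_eq3})) and $\mathbb{E}[\bar Y_n^4] = 3\,\text{Var}(\bar Y_n)^2$ for the univariate Gaussian $\bar Y_n$. Both routes produce the same double and triple geometric sums.

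The conceptual content of the proof is thus light; the real obstacle is the algebraic bookkeeping. After substitution one is left with a rational function of $\rho$ over $(1-\rho^2)^2$ carrying powers $\rho^n$ and $\rho^{2n}$, and the target form requires regrouping these contributions by power of $1/n$ so as to expose the four coefficients $A_1,\ldots,A_4$. I would organise the computation from the outset around this $1/n$-expansion, noting that the leading $n^2 - 1$ term matches the $\rho = 0$ limit, where $(n-1)\,s_n^2/\sigma^2 \sim \chi^2_{n-1}$ and $\mathbb{E}[s_n^4] = \sigma^4(n+1)/(n-1)$, and that the corrections $n A_1$, $A_2$, $A_3/n$, $A_4/n^2$ are then read off level by level.
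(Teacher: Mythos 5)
Your proposal is correct, but your primary route differs from the paper's in a useful way. The paper proceeds entirely by scalar expansion: it writes $(n-1)^2 s_n^4 = (\sum_i Y_i^2)^2 + n^2\bar Y_n^4 - 2n\,\bar Y_n^2\sum_i Y_i^2$, evaluates $\mathbb{E}[Y_i^4]$ and $\mathbb{E}[\bar Y_n^4]$ as three times squared variances, handles the cross terms $\mathbb{E}[Y_i^2Y_k^2]$ via the explicit orthogonal decomposition $Y_k=\rho^{k-i}Y_i+\sqrt{1-\rho^{2(k-i)}}\,Y_i^{\perp}$, and treats $\mathbb{E}[\bar Y_n^2 Y_i^2]$ with the bivariate Gaussian identity $\mathbb{E}[U^2V^2]=\mathbb{E}[U^2]\mathbb{E}[V^2]+2\,\mathrm{Cov}(U,V)^2$ together with equation (\ref{lemma_covar_eq3}); this is precisely the "direct alternative" you sketch as a consistency check. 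Your main argument instead packages everything as the quadratic form $Y^TPY$ with $P=I_n-\frac1n\mathbbm{1}_n\mathbbm{1}_n^T$ and applies the Wick identity $\mathbb{E}[(Y^TPY)^2]=(\sigma^2\mathrm{tr}(P\Omega))^2+2\sigma^4\,\mathrm{tr}((P\Omega)^2)$, reducing the problem to the three quantities $\mathrm{tr}(\Omega^2)$, $\mathbbm{1}_n^T\Omega^2\mathbbm{1}_n$ and $\mathbbm{1}_n^T\Omega\mathbbm{1}_n$; these are correctly identified with, respectively, the double geometric sum the paper computes inside its $\sum_{i<k}Y_i^2Y_k^2$ step, the sum in (\ref{lemma_covar_eq3}), and Lemma \ref{lemma_var_num}. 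The trace formulation buys compactness and an automatic accounting of all Isserlis pairings (no risk of miscounting cross terms), and it makes transparent that only three scalar sums are needed; the paper's route is more elementary, using only pointwise Gaussian moment facts and mapping term by term onto the lemmas already stated. Both routes converge to the same rational-in-$\rho$ bookkeeping, which you, like the paper, leave at the level of "regroup by powers of $1/n$"; your $\rho=0$ check against $\mathbb{E}[s_n^4]=\sigma^4(n+1)/(n-1)$ for $(n-1)s_n^2/\sigma^2\sim\chi^2_{n-1}$ is a worthwhile addition the paper does not make (and note that the stray $r$'s in the stated $A_3$ should read $\rho$).
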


\begin{proof}: See \ref{proof_second_moment}
\end{proof}

Combining the two results leads to
\begin{proposition}\label{second_moment_denom}
The variance of $s_n^2$ is given by:
\begin{align}
\text{Var}[s_n^4] & = \frac{\sigma^4}{(1-\rho^2)^2} \frac{1}{(n-1)} \left[ 2 + \frac{ \rho}{n-1}  \left( n B_1 + B_2 + \frac{1}{n} B_3 + \frac{1}{n^2} B_4 \right) \right]
\end{align}
with 
\begin{align}
B_1 & = \frac{-2}{1+ \rho} \\
B_2 & =-\frac{2}{1-\rho } -\frac{4 \rho^2}{(1-\rho )^2} -\frac{2 \left(1-\rho ^n\right)}{(1-\rho )^2} \\
& -\frac{2 \left(12 \rho^{n+1}+6 \rho ^{n+2}-2 \rho ^{2 n+2}+6 \rho ^n+3 \rho ^3+11 \rho^2+9 \rho +3\right)}{\left(1-\rho ^2\right)^2} \\
B_3 & = \frac{ (1-\rho^n)(13- 4 \rho+15 \rho^2-\rho^n - 32 \rho^{n+1} + \rho^{n+2})  }{(1 -r)^3 (1 + r)} \\
B_4 & = \frac{- 4 (1-3\rho) (1-\rho^{n})^2}{(1-\rho)^4} 
\end{align}
\end{proposition}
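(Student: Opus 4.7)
The plan is to apply the elementary identity $\text{Var}(s_n^2) = \mathbb{E}[s_n^4] - (\mathbb{E}[s_n^2])^2$ and thereby reduce the proof to a purely algebraic combination of the two preceding propositions. Proposition \ref{prop_denom_expectation} supplies $\mathbb{E}[s_n^2]$ and Proposition \ref{second_moment_denom} supplies $\mathbb{E}[s_n^4]$, so no new probabilistic computation is required: the entire task is to expand $(\mathbb{E}[s_n^2])^2$, subtract, and reorganize the result into the prescribed form indexed by $B_1,\ldots,B_4$.

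Concretely, I would first factor out the common prefactor $\sigma^4/(1-\rho^2)^2$ from both $\mathbb{E}[s_n^4]$ and $(\mathbb{E}[s_n^2])^2$. The leading part of $\mathbb{E}[s_n^4]$ rewrites as $\tfrac{n^2-1}{(n-1)^2} = \tfrac{n+1}{n-1} = 1 + \tfrac{2}{n-1}$. For $(\mathbb{E}[s_n^2])^2$, introducing the abbreviations $a = \tfrac{2\rho}{(1-\rho)(n-1)}$ and $b = \tfrac{2\rho(1-\rho^n)}{n(n-1)(1-\rho)^2}$ yields $(1-a+b)^2 = 1 - 2a + 2b + a^2 - 2ab + b^2$. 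The leading $1$'s cancel against the $1$ coming from $\tfrac{n+1}{n-1}$, and the dominant surviving $\tfrac{2}{n-1}$ gives precisely the leading $2$ inside the bracket of the target formula. What remains is to show that every other term can be grouped into a single factor $\tfrac{\rho}{(n-1)^2}$ multiplying a polynomial in $1/n$ whose coefficients are exactly $B_1,B_2,B_3,B_4$.

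The main difficulty is bookkeeping rather than insight. The square $(\mathbb{E}[s_n^2])^2$ produces three cross terms: $a^2$ contributes an $O(1/(n-1)^2)$ term in $\rho^2/(1-\rho)^2$; $-2ab$ contributes $O(1/(n(n-1)^2))$ terms involving $\rho^{n+1}$ and $\rho^2/(1-\rho)^3$; and $b^2$ contributes $O(1/(n^2(n-1)^2))$ terms in $(1-\rho^n)^2/(1-\rho)^4$. These must be re-based onto the common denominator $\tfrac{\rho}{(n-1)^2}$ used in the target and then combined with $A_1,A_2,A_3,A_4$ from Proposition \ref{second_moment_denom}, carefully tracking the factors $1-\rho^n$ and $1-\rho^{2n}$. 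The slightly involved shape of $B_2$ and $B_3$ (compared with the clean $B_1$ and $B_4$) is the direct fingerprint of these cross contributions.

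To reduce the risk of sign errors, I would first verify the leading asymptotics $\text{Var}(s_n^2) \sim \tfrac{2\sigma^4}{(1-\rho^2)^2(n-1)}$, then check the $O(1/(n-1)^2)$ correction using $A_1$ alone (which should yield $B_1 = -2/(1+\rho)$ after combining with $-2a$), and only then proceed to the subleading $1/n$ and $1/n^2$ blocks. At that stage the identities involving $1-\rho^n$ and $(1-\rho^n)^2$ produced by $-2ab$ and $b^2$ match the structure already present in $A_3$ and $A_4$, so regrouping yields the stated $B_3$ and $B_4$ after elementary factorisation.
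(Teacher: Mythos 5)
Your proposal follows essentially the same route as the paper's own proof: apply $\text{Var}(s_n^2)=\mathbb{E}[s_n^4]-(\mathbb{E}[s_n^2])^2$, reuse Propositions \ref{prop_denom_expectation} and \ref{second_moment_denom}, expand the square of the expectation (the paper's $E1$ term, your $(1-a+b)^2$), and regroup the terms by powers of $n$ into the coefficients $B_1,\ldots,B_4$. The paper's appendix does exactly this algebraic bookkeeping, so your plan matches it in substance.
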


\begin{proof}: See \ref{proof_var_moment}
\end{proof}

It is worth noting that a direct approach as explained in \cite{Benhamou_2018_SampleVariance} could also give the results for the first, second moments and variance for the numerator and denominator.

\section{Resulting distribution}
The previous section shows that under the AR(1) assumptions, the t-statistic is no longer a Student distribution but the ratio of a normal whose first and second moments have been given above and the norm of a Gaussian whose moments have also been provided. To go further, one need to rely on numerical integration. This is the subject of further research.


\section{Conclusion}
In this paper, we have given the explicit first, second moment and variance of the numerator of the t statistic under the assumption of AR(1) underlying process. We have seen that these moments are very sensitive to the correlation $\rho$ assumptions and that the distribution is far from a Student distribution.

\clearpage

\appendix
\section{Various Proofs for the Student density}
\subsection{Deriving the t-student density} \label{t_student}
Let us first remark that in the T-statistic, the $\sqrt n$ factor cancels out to show the degree of freedom $\sqrt{n-1}$ as follows:
\begin{equation}
T_n = \frac{\bar{X}\,-\,\mu}{s_n/\sqrt{n}} =  \frac{\bar{X}\,-\,\mu}{\frac{\sigma}{\sqrt{n}}} \frac{1}{\frac{s_n}{\sigma}}  = U \,\frac{1}{\frac{s_n}{\sigma}} = \sqrt{n-1} \frac{U}{\sqrt{\frac{\sum(X_i-\bar X)^2}{\sigma^2}}} = \sqrt{n-1} \frac{U}{V}
\end{equation}

In the above expression, it is well know that if $X\sim \,\,\small N(\mu, \sigma)$, then the renormalized variable $U=\frac{(\bar{X}-\mu)}{\sigma/\sqrt{n}}\,\,\sim \,\,\small N(0,1)$ and $V = \sqrt{\frac{\sum(X_i-\bar X)^2}{\sigma^2}}  \sim \,\,\small \chi_{(n-1)}^2$ as well as $U$ and $V$ are independent. Hence, we need to prove that the distribution of $T= U/{\sqrt{V/k}}$ is a Student distribution with $U\sim N(0,1),$ and $V\sim\chi^2_k$ mutually independent, and $k$ is the degree of freedom of the chi squared distribution. \\
\par

The core of the proof relies on two steps that can be proved by various means. \\
\textbf{Step 1} is to prove that the distribution of $T$  is given by
\begin{equation}\label{t_student:step1_eq1}
f_T(t)  = \frac{1}{\Gamma(\frac{k}{2}) 2^{\frac{k+1}{2}}\sqrt{\pi k}} \int_0^\infty e^{-w (\frac{t^2}{2 k}+\frac{1}{2})} w^{\frac{ k-1}{2} }  dw 
\end{equation} 
\textbf{Step 2} is to compute explicitly the integral in equation \ref{t_student:step1_eq1} \\
Step 1 can be done by  transformation theory using the Jacobian of the inverse transformation or the property of the ratio distribution. Step 2 can be done by Gamma function, Gamma distribution properties, Mellin transform or Laplace transform.

\subsection{Proving step 1}
\subsubsection{Using  transformation theory}
The joint density of $U$ and $V$ is:
\begin{equation}
f_{U,V}(u,v) = \underbrace{\frac{1}{(2\pi)^{1/2}} e^{-u^2/2}}_{\text{pdf } N(0,1)}\quad \underbrace{\frac{1}{\Gamma(\frac{k}{2})\,2^{k/2}}\,v^{(k/2)-1}\, e^{-v/2}}_{\text{pdf }\chi^2_k}
\end{equation}
with the distribution support given by $-\infty  < u  < \infty$ and $0 < v  < \infty$. 

Making the transformation $t=\frac{u}{\sqrt{v/ k}}$ and $w=v$, we can compute the inverse: $u=t\,\left(\frac{w}{k}\right)^{1/2}$ and $v=w$.
The Jacobian \footnote{determinant of the Jacobian matrix of the transformation} is given by
\begin{equation}
J(t,w)=  
\begin{vmatrix}
\left(\frac{w}{k}\right)^{1/2} & \frac{t}{ 2 \left( k w \right)^{1/2} }\\
0 & 1
\end{vmatrix}
\end{equation}
\noindent whose value is $(w/ k)^{1/2}$. The marginal pdf is therefore given by:
\begin{eqnarray}
f_T(t) & = & \displaystyle\int_0^\infty \,f_{U,V}\bigg(t\,(\frac{w}{k})^{1/2},w\bigg) J(t,w) \,\mathrm{d} w \\ 
& = & \displaystyle\int_0^\infty 
\frac{1}{(2\pi)^{1/2}} e^{-(t^2 \, \frac{w}{k})  /2}
\frac{1}{\Gamma(\frac{k}{2})\,2^{k/2}} w^{(k/2)-1}\, e^{-w/2}
(w/ k )^{1/2}  \,\mathrm{d} w \\ 
&  = & \frac{1}{\Gamma(\frac{k}{2}) 2^{\frac{k+1}{2}}\sqrt{\pi k}} \int_0^\infty e^{-w (\frac{t^2}{2 k}+\frac{1}{2})} w^{\frac{ k-1}{2} }  dw 
\end{eqnarray}
which proves the result \qed \\\\

\subsubsection{Using ratio distribution}
The square-root of $V$, $\sqrt V \equiv \hat V$ is distributed as a chi-distribution with $k$ degrees of freedom, which has density
\begin{equation}
f_{\hat V}(\hat v) = \frac {2^{1-\frac k 2}}{\Gamma\left(\frac {k}{2}\right)} \hat v^{k-1} \exp\Big \{{-\frac {\hat v^2}{2}} \Big\} \label{t_student:step1_eq21}
\end{equation}

\noindent Define $X \equiv \frac {\hat V}{\sqrt k}$. Then by change-of-variable, we can compute the density of $X$:
\begin{eqnarray}
f_{X}(x) 	&= & f_{\hat V}(\sqrt k x)\Big |\frac {\partial \hat V}{\partial X} \Big|  \\
		&= & \frac {2^{1-\frac k 2}}{\Gamma\left(\frac {k}{2}\right)} k^{\frac k 2}x^{k-1} \exp\Big \{{-\frac {k \, x^2}{2}}  \Big\} \label{t_student:step1_eq22}
\end{eqnarray}

The student's t random variable defined as $T = \frac {Z} {X}$ has a distribution given by the ratio distribution:
\begin{equation}
f_T(t) = \int_{-\infty}^{\infty} |x|f_U(xt)f_X(x)dx   
\end{equation}

\noindent We can notice that $f_X(x) = 0$ over the interval $[-\infty, 0]$ since $X$ is a non-negative random variable. 
We are therefore entitled to eliminate the absolute value. This means that the integral reduces to 

\begin{eqnarray}
f_T(t) 	& =&  \int_{0}^{\infty} xf_U(xt)f_X(x)dx   \\
		& = & \int_{0}^{\infty} x \frac{1}{\sqrt{2\pi}}\exp \Big \{{-\frac{(xt)^2}{2}}\Big\}\frac {2^{1-\frac k 2}}{\Gamma\left(\frac {k}{2}\right)} k^{\frac k 2}x^{k-1} \exp\Big \{{-\frac {k}{2}x^2}  \Big\}dx \\
		& = & \frac{1}{\sqrt{2\pi}}\frac {2^{1-\frac k 2}}{\Gamma\left(\frac {k}{2}\right)} k^{\frac k 2}\int_{0}^{\infty} x^k \exp \Big \{-\frac 12 (k +t^2) x^2\Big\} dx \label{proof1:student3}
\end{eqnarray}

To conclude, we make the following change of variable $x=\sqrt{\frac w k}$ that leads to
\begin{equation}
f_T(t) =   \frac{1}{\Gamma(\frac{k}{2}) 2^{\frac{k+1}{2}}\sqrt{\pi k}} \int_0^\infty e^{-w (\frac{t^2}{2 k}+\frac{1}{2})} w^{\frac{ k-1}{2} }  dw 
\end{equation}
\qed

\subsection{Proving step 2}
The first step is quite relevant as it proves that the integral to compute takes various form depending on the change of variable done.
\subsubsection{Using Gamma function}
Using the change of variable $w = \frac{ 2 k u}{t^2 + k }$ and knowing that $\Gamma(n) =\int_0^\infty e^{-u}u^{n-1}\ du$, we can easily conclude as follows:
\begin{eqnarray} 
f_T(t)  &=& \frac{1}{\Gamma(\frac{k}{2}) 2^{\frac{k+1}{2}}\sqrt{\pi k}} \int_0^\infty e^{-w (\frac{t^2}{2 k}+\frac{1}{2})} w^{\frac{ k-1}{2} }  dw  \\
&= &\frac{1}{\Gamma(\frac{k}{2})2^{\frac{k+1}{2}}\sqrt{\pi k}} \bigg(\frac{2 k}{t^2+k} \bigg)^{\frac{k+1}{2}}\int_0^\infty e^{-u}u^{\frac{k+1}{2}-1}\ du  \\
&= & \frac{1}{\Gamma(\frac{k}{2})2^{\frac{k+1}{2}}\sqrt{\pi k}} \bigg(\frac{2 k}{t^2+k} \bigg)^{\frac{k+1}{2}}\Gamma\Big(\frac{k+1}{2}\Big) \\
& =& \frac{\Gamma(\frac{k+1}{2})}{\Gamma(\frac{k}{2})}\frac{1}{\sqrt{\pi k}}\bigg( \frac{k }{t^2+ k}\bigg)^{\frac{k+1}{2}}
\end{eqnarray}

\qed

\subsubsection{Using Gamma distribution properties}
Another way to conclude is to notice the kernel of a gamma distribution pdf given by $x^{\alpha-1}\,e^{x\,\lambda}$ in the integral of \ref{t_student:step1_eq1} 
with parameters $\alpha=(k+1)/2,\,\lambda=(1/2)(1+t^2/k)$. The generic pdf for the gamma distribution is $\large \frac{\lambda^\alpha}{\Gamma(\alpha)}\,x^{\alpha-1}\,e^{x\,\lambda}$ and it sums to one over $\left[ 0, \infty \right]$, hence

\begin{eqnarray} 
f_T(t)  &=& \frac{1}{\Gamma(\frac{k}{2}) 2^{\frac{k+1}{2}}\sqrt{\pi k}} \int_0^\infty e^{-w (\frac{t^2}{2 k}+\frac{1}{2})} w^{\frac{ k-1}{2} }  dw  \\
& = & \frac{1}{\Gamma(\frac{k}{2}) 2^{\frac{k+1}{2}}\sqrt{\pi k}}  \frac{ \Gamma(\frac{k+1}{2}) }{  (\frac{t^2+ k}{2 k})^{\frac{k+1}{2} } } \\
& =& \frac{\Gamma(\frac{k+1}{2})}{\Gamma(\frac{k}{2})}\frac{1}{\sqrt{\pi k}}\bigg( \frac{k }{t^2+ k}\bigg)^{\frac{k+1}{2}}
\end{eqnarray}
\qed

\subsubsection{Using Mellin transform}
The integral of equation \ref{t_student:step1_eq1} can be seen as a Mellin transform for the function $g(x) = e^{-w (\frac{t^2}{2 k}+\frac{1}{2})} $, whose solution is well known and given by 
\begin{eqnarray}
\mathcal{M}_g(\frac{ k+1}{2} )  \equiv \int_0^{\infty} x^{\frac{ k+1}{2} -1 } g(x) dx  = \frac{ \Gamma(\frac{k+1}{2}) }{  (\frac{t^2+ k}{2 k})^{\frac{k+1}{2} } }  \label{proof1:student4} 
\end{eqnarray}
Like previously, this concludes the proof. \qed

\subsubsection{Using Laplace transform}
We can use a result of Laplace transform for the function $ f(u) = u^{\alpha}$ as folllows:
\begin{equation}
\mathcal{L}_{f}(s) = \int_0^\infty e^{-u s }u^{\alpha} du  = \frac{ \Gamma(\alpha + 1 ) }{ s ^{ \alpha + 1 }}
\end{equation}

Hence the integral $\int_0^\infty e^{-u}u^{\frac{k+1}{2}-1}\ du$ is simply the the value of the Laplace transform of the polynomial function taken for $s=1$, whose value is $\Gamma\Big(\frac{k+1}{2}\Big)$. Making the change of variable $w = \frac{ 2 k u}{t^2 + k }$ in equation \ref{t_student:step1_eq1} enables to conclude similarly to the proof for the Gamma function \qed

\subsubsection{Using other transforms}
Indeed, as the Laplace transform is related to other transform, we could also prove the result with Laplace–Stieltjes, Fourier, Z or Borel transform.

\subsection{Sum of independent normals} \label{sum_indep_normal}
We want to prove that if $X_i \sim N(0,1)$ then $\frac 1 {n-1} \sum_{i=1}^n (X_i-\bar X_n)^2 \sim \chi^2_{n-1}$. There are multiple proofs for this results:
\begin{itemize}
\item Recursive derivation
\item Cochran's theorem
\end{itemize}

\subsubsection{Recursive derivation}

\begin{lemma}
Let us remind a simple lemma:
\begin{itemize}
\item If $Z$ is a $N(0, 1)$ random variable, then $Z^2 \sim \chi^2_1$; which states that the square of a standard normal random variable is a chi-squared random variable.
\item If $X_1, \ldots, X_n$ are independent and $X_i \sim \chi^2_{p_i}$ then $X_1 + \ldots + X_n \sim  \chi^2_{p_1+ \ldots + p_n}$, which states that independent chi-squared variables add to a chi-squared variable with its degree of freedom equal to the sum of individual degree of freedom.
\end{itemize}
\end{lemma}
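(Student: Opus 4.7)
The plan is to establish the two bulleted claims separately, each via a standard technique that sidesteps the AR(1) machinery of the main paper. Both are textbook facts, and the proofs are short enough that I would present them inline rather than farm them out to a reference.

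For the first claim, I would argue by a direct change of variables on the density. Starting from $f_Z(z) = (2\pi)^{-1/2}\exp(-z^2/2)$ and setting $Y = Z^2$, the map $z \mapsto z^2$ folds the real line onto $[0,\infty)$ with two preimages $z = \pm\sqrt{y}$. Summing the contributions and multiplying by the Jacobian $|dz/dy| = 1/(2\sqrt{y})$ yields $f_Y(y) = (2\pi y)^{-1/2}\exp(-y/2)$ for $y > 0$, which is exactly the $\chi^2_1$ density (equivalently, the $\Gamma(1/2, 2)$ density). Alternatively, one can compute $\mathbb{E}[e^{tZ^2}]$ directly by completing the square in the Gaussian integral, obtaining $(1-2t)^{-1/2}$ for $t < 1/2$, which is the MGF of $\chi^2_1$.

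For the second claim, the cleanest route is through moment generating functions. For $X \sim \chi^2_p$, the MGF is $M_X(t) = (1-2t)^{-p/2}$ on $t < 1/2$; this either follows from the Gamma MGF or from direct integration against the $\chi^2_p$ density. By independence, the MGF of the sum factorizes:
\begin{equation}
M_{X_1+\cdots+X_n}(t) \;=\; \prod_{i=1}^n (1-2t)^{-p_i/2} \;=\; (1-2t)^{-(p_1+\cdots+p_n)/2},
\end{equation}
which is precisely the MGF of $\chi^2_{p_1+\cdots+p_n}$. Since all MGFs in question are finite on a common neighborhood of zero, uniqueness of the MGF pins down the distribution of the sum as claimed.

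The main obstacle is really only bookkeeping: making sure both branches are included when folding the normal density in the first part, and verifying that the MGFs share a common interval of convergence around zero so the uniqueness argument in the second part is legitimate. Neither step is conceptually deep, and both are the sort of computation one typically cites rather than redoes, which is consistent with the author's framing of this as a ``reminder'' lemma supporting the recursive derivation to follow.
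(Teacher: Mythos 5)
Your proof is correct and follows essentially the same route the paper indicates: a change-of-variables (transformation) argument for the square of a standard normal, and moment generating functions with the uniqueness theorem for the sum of independent chi-squared variables. The only difference is that you carry out the details (both preimages in the folding, the common interval of convergence) that the paper merely alludes to in a single sentence.
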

The proof of this simple lemma can be established with variable transformations for the fist part and by moment generating function for the second part.
We can now prove the following proposition

\begin{proposition}
If $X_1, \ldots, X_n$ is a random sample from a $N( \mu, \sigma^2)$ distribution, then
\begin{itemize}
\item $\bar{X}_n$ and $s_n^2$ are independent random variables.
\item $\bar{X}_n$  has a $N( \mu, \sigma^2 / n )$ distribution where $N$ denotes the normal distribution.
\item  $(n-1) s_n^2 / \sigma^2 $ has a chi-squared distribution with $n - 1$ degrees of freedom.
\end{itemize}
\end{proposition}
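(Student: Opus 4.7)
The plan is to reduce all three statements to a single application of the orthogonal invariance of the standard multivariate Gaussian. The second bullet is immediate: $\bar{X}_n = \frac{1}{n}\mathbbm{1}_n^T X$ is a linear combination of i.i.d.\ normals, hence normal, with mean $\mu$ and variance $\sigma^2/n$ by direct computation of its first two moments.

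For the remaining two bullets, I would introduce the centered, scaled vector $Z_i = (X_i - \mu)/\sigma$, so that $Z = (Z_1, \ldots, Z_n)^T \sim N(0, I_n)$. Apply Gram--Schmidt to extend $e_1 = \tfrac{1}{\sqrt{n}}(1, \ldots, 1)^T$ to an orthonormal basis $e_1, \ldots, e_n$ of $\mathbb{R}^n$, and let $Q$ be the orthogonal matrix with rows $e_i^T$. Set $Y = QZ$; then $Y$ is Gaussian with mean zero and covariance $Q I_n Q^T = I_n$, so its components $Y_1, \ldots, Y_n$ are i.i.d.\ $N(0,1)$.

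Next, identify $Y_1 = e_1^T Z = \sqrt{n}\,(\bar{X}_n - \mu)/\sigma$, so that $\bar{X}_n$ is a deterministic function of $Y_1$ alone. By orthogonal invariance of the Euclidean norm, $\sum_{i=1}^n Y_i^2 = \sum_{i=1}^n Z_i^2$, and combined with the Pythagorean identity $\sum_i (X_i - \mu)^2 = \sum_i (X_i - \bar{X}_n)^2 + n(\bar{X}_n - \mu)^2$ one obtains
\[
\sum_{i=2}^n Y_i^2 \;=\; \sum_{i=1}^n Z_i^2 - Y_1^2 \;=\; \frac{(n-1)s_n^2}{\sigma^2}.
\]
Thus $(n-1)s_n^2/\sigma^2$ is a function of $(Y_2, \ldots, Y_n)$ only. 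Since $Y_1$ is independent of $(Y_2, \ldots, Y_n)$, the independence of $\bar{X}_n$ and $s_n^2$ follows at once, and by the preceding lemma $\sum_{i=2}^n Y_i^2 \sim \chi^2_{n-1}$, which is exactly the third bullet.

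The main obstacle is conceptual rather than computational: all the mileage comes from the invariance of $N(0, I_n)$ under orthogonal transformations, which must be invoked through the covariance characterization of the multivariate normal. Once that is accepted, Gram--Schmidt and the Pythagorean decomposition do all the work. An alternative route, mentioned in the excerpt as the recursive derivation, would instead proceed by induction on $n$, peeling off one coordinate at a time and establishing the distributional and independence properties inductively; this is more laborious and obscures the clean geometric picture behind the rotation argument.
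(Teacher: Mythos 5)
Your proof is correct, but it takes a genuinely different route from the paper's detailed argument. The paper's main proof changes variables to $(y_1,y_2,\ldots,y_n)=(\bar x,\,x_2-\bar x,\ldots,x_n-\bar x)$, factorizes the joint density to obtain the independence of $\bar X_n$ and $s_n^2$, and then establishes $(n-1)s_n^2/\sigma^2\sim\chi^2_{n-1}$ by induction on $n$ via the recursion $n s_{n+1}^2=(n-1)s_n^2+\tfrac{n}{n+1}(x_{n+1}-\bar x_n)^2$, together with the lemma on sums of independent chi-squares. You instead perform a single orthogonal (Helmert-type) rotation $Y=QZ$ with first row $\tfrac{1}{\sqrt n}\mathbbm{1}_n^T$, identify $Y_1=\sqrt n(\bar X_n-\mu)/\sigma$, and use norm invariance plus the Pythagorean identity to get $\sum_{i=2}^n Y_i^2=(n-1)s_n^2/\sigma^2$, so independence and the $\chi^2_{n-1}$ law drop out simultaneously; this is essentially a self-contained version of the paper's second, briefly sketched alternative (the Cochran's-theorem projection onto $\mathrm{span}(\mathbbm{1}_n)$ and its orthogonal complement), except that you prove the needed facts directly rather than citing Cochran. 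What each buys: your rotation argument is shorter, geometric, and delivers all three bullets in one stroke, relying only on the invariance of $N(0,I_n)$ under orthogonal maps; the paper's recursive derivation is more elementary (no construction of an orthonormal basis) and yields as a by-product the update formula for $s_n^2$, at the cost of a longer induction and a density computation.
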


\begin{proof} 
Without loss of generality, we assume that $\mu = 0$ and $\sigma  = 1$. We first show that $s_n$ can be written only in terms of $\left( X_i - \bar{X}_n \right)_{i=2, \ldots, n}$. 
This comes from:

\begin{eqnarray}
s_n^2 & = & \frac{1}{n-1}\sum_{i=1}^{n}(X_i - \bar{X}_n)^2 = \frac{1}{n-1} \left[ (X_1 - \bar{X}_n)^2  + \sum_{i=2}^{n}(X_i - \bar{X}_n)^2 \right] \\
& = & \frac{1}{n-1} \left[ (\sum_{i=2}^{n}(X_i - \bar{X}_n))^2  + \sum_{i=2}^{n}(X_i - \bar{X}_n)^2 \right] 
\end{eqnarray}
where we have use the fact that $\sum_{i=1}^{n}(X_i - \bar{X}_n)= 0$, hence $X_1 - \bar{X}_n = - \sum_{i=2}^{n}(X_i - \bar{X}_n)$. 

We now show that  $s_n^2$ and $\bar{X}_n$ are independent as follows:
The joint pdf of the sample  $X_1, \ldots, X_n$  is given by

\begin{equation}
f(x_1, \ldots, x_n) = \frac 1 {(2 \pi )^{n/2}} e^{- \frac 1 2  \sum_{i=1}^{n} x_i^2}, \quad -\infty  < x_i < \infty.
\end{equation}

We make the    
\begin{eqnarray}
y_1 &= & \bar x \\
y_2 & = & x_2 - \bar x \\
\vdots \\
y_n & = & x_n - \bar x 
\end{eqnarray}

The Jacobian of the transformation is equal to $1/n$. Hence
\begin{eqnarray}
f(y_1, , \ldots, y_n) & =&  \frac n {(2 \pi )^{n/2}}  e^{- \frac 1 2 (y_1 - \sum_{i=2}^{n} y_i)^2 }  e^{- \frac 1 2  \sum_{i=2}^{n} (y_i + y_1)^2}, \quad -\infty  < x_i < \infty \\
& = & [ (\frac n {2 \pi} ) ^{1/2}  e^{- \frac n 2 y_1^2 } ] [  \frac {n^{1/2}}{ (2 \pi) ^{(n-1)/2} } e ^{ - \frac 1 2  [ \sum_{i=2}^{n} y_i^ 2 +  (\sum_{i=2}^{n}  y_i)^2 ] } ] 
\end{eqnarray}

which proves that $Y_1 = \bar X_n$ is independent of $Y_2, \ldots,  Y_n$, or equivalently,  $\bar{X}_n$ is independent of $s_n^2$. To finalize the proof, we need to derive a recursive equation for $s_n^2$ as follows:
We first notice that there is a relationship between $\bar x_n$ and $\bar x_{n+1}$ as follows:

\begin{eqnarray}
\bar x_{n+1} =  \frac{\sum_{i=1}^{n+1} x_i }{n + 1} = \frac{x_{n+1} + n \bar{x}_n}{n + 1} = \bar x_n + \frac{1}{n + 1} (x_{n+1}- \bar x_n),
\end{eqnarray}

We have therefore:

\begin{eqnarray}
n s^2_{n+1} &= & \sum_{i=1}^{n+1} (x_i - \bar x_{n+1})^2 = \sum_{i=1}^{n+1} [  (x_i - \bar x_{n}) - \frac{1}{n+1}( x_{n+1} - \bar x_n) ] ^2 \\
&=& \sum_{i=1}^{n+1} [  (x_i - \bar x_{n})^2 - 2 (x_i - \bar x_n) ( \frac{ x_{n+1}-\bar x_n }{n+1} ) + \frac{1}{(n+1)^2} (x_{n+1} - \bar x_n)^2 ] \\
&=& \sum_{i=1}^{n+1}  (x_i - \bar x_{n})^2  +   (x_{n+1} - \bar x_{n})^2 - 2 \frac{(x_{n+1} - \bar x_n)^2}{n+1} + \frac{(n + 1)}{(n + 1)^2} (x_{n+1} - \bar x_n)^2 \\
&=& (n - 1) s_{n}^2 + \frac{n}{n + 1} (x_{n+1} - \bar x_n)^2
\end{eqnarray}

We can now get the result by induction. The result is true for $n=2$ since $s_2^2 = \frac{ (x_2 -x_1)^2}{2}$ with $ \frac{ x_2 -x_1}{\sqrt 2} \sim N(0,1)$, hence $s_2^2 \sim \chi^2_1$. Suppose it is true for $n$, that is $(n - 1) s_{n}^2 \sim \chi^2_{n-1}$, then since $n s^2_{n+1} = (n - 1) s_{n}^2 + \frac{n}{n + 1} (x_{n+1} - \bar x_n)^2$, $s^2_{n+1}$ is the sum of a $\chi^2_{n-1}$ and $ \frac{n}{n + 1} (x_{n+1} - \bar x_n)^2$ which is independent of $s_{n}$ and distributed as $\chi^2_1$ since $x_{n+1} - \bar x_n\sim N(0, \frac{n+1}{n}$. Using our lemma, this means that $n s^2_{n+1} \sim \chi_{n}^2$. This concludes the proof. 
\end{proof}

\subsubsection{Cochran's theorem}
\begin{proof}
We define the sub vectorial space $F$ spanned by the vector $\mathbbm{1}_n = (1, \ldots, 1)^2$ which is one for each coordinate. 
Its projection matrix is given by $P_F = \mathbbm{1}_n (\mathbbm{1}_n^T \mathbbm{1}_n)^{-1} \mathbbm{1}_n^T = \frac 1 n \mathbbm{1}_n \mathbbm{1}_n^T$. 
The orthogonal sub vectorial space of $\mathbb{R}_n$, denoted by $F^{\bot}$, 
has its projection matrix given by $P_{F^{\bot}} = Id_n - P_F$. 
The projection of the $(x_i)_{i=1, \dots, n}$ over $F$ (respectively $F^{\bot}$) is given by $(\hat x_n, \ldots, \hat x_n)^T$ (respectively) $(x_1 - \hat x_n, \ldots, x_n - \hat x_n)^T$.
 The Cochran's theorem states that these two vectors are independent and that $|| P_{F^{\bot}} X ||^2 = (n-1) s_n ^2 \sim \chi^2(n-1)$
\end{proof}

\section{Various proof around Normal}

\subsection{Linear combination of Correlated Normal} \label{corr_normal}
For any $d$-dimensional multivariate normal distribution $X\sim N_{d}(\mu,\Sigma)$ where $N_d$ stands for the multi dimensional normal distribution,  
 $\mu=(\mu_1,\dots,\mu_d)^T$ and $\Sigma_{jk}=cov(X_j,X_k)\;\;j,k=1,\dots,d$, the characteristic function is given by:
\begin{eqnarray}
\varphi_{X}({\bf{t}}) & =  & E\left[\exp(i{\bf{t}}^TX)\right]=\exp\left(i{\bf{t}}^T\mu-\frac{1}{2}{\bf{t}}^T\Sigma{\bf{t}}\right) \\
& =& \exp\left(i\sum_{j=1}^{d}t_j\mu_j-\frac{1}{2}\sum_{j=1}^{d}\sum_{k=1}^{d}t_jt_k\Sigma_{jk}\right)
\end{eqnarray}

For a new random variable $Z={\bf{a}}^TX=\sum_{j=1}^{d}a_jX_j$, the characteristic function for $Z$ writes:
\begin{eqnarray}
\varphi_{Z}(t)& = & E\left[\exp(itZ)\right]=E\left[\exp(it{\bf{a}}^TX)\right]=\varphi_{X}(t{\bf{a}}) \\
&= & \exp\left(it\sum_{j=1}^{d}a_j\mu_j-\frac{1}{2}t^2\sum_{j=1}^{d}\sum_{k=1}^{d}a_ja_k\Sigma_{jk}\right)
\end{eqnarray}

This proves that  $Z$ is normally distributed with mean given by
$\mu_Z=\sum_{j=1}^{d}a_j\mu_j$ and variance given by $\sigma^2_Z=\sum_{j=1}^{d}\sum_{k=1}^{d}a_ja_k\Sigma_{jk}$.  We can simplify the expression for the variance since $\Sigma_{jk}=\Sigma_{kj}$ get:

\begin{equation}
\sigma^2_Z=\sum_{j=1}^{d}a_j^2\Sigma_{jj}+2\sum_{j=2}^{d}\sum_{k=1}^{j-1}a_ja_k\Sigma_{jk}
\end{equation}
\qed

\subsection{Variance of the sample mean in AR(1) process} \label{proof_var_num}
The computation is given as follows
\begin{eqnarray}
\text{Var}(\sqrt n (\bar{X}_n-\mu)) &= & \frac{1}{n}\text{Var}( \sum_{i=1}^{n} (X_i-\mu) ) = \frac{1}{n} \mathbb{E} \left[ ( \sum_{i=1}^{n} (X_i - \mu)) ^2 \right] \\
&=&   \frac{1}{n} \mathbb{E} \left[ \sum_{i=1}^{n} (X_i- \mu )  ^2 + 2 \sum_{i=1..n, j=1...i-1} (X_i - \mu)  (X_j -  \mu) \right] 
\end{eqnarray}

We have
\begin{eqnarray}
\mathbb{E} \left[ \sum_{i=1}^{n} (X_i- \mu )^2 \right]   & = & \frac{ n \sigma^2}{(1 - \rho^2)} 	\\
\nonumber \\ 
\text{and} \quad  \quad \mathbb{E} \left[  \sum_{i=1..n, j=1...i-1} (X_i - \mu)  (X_j -  \mu)\right] & = &  \frac{ \sigma^2}{(1 - \rho^2)} \sum_{\substack{i=1..n \\ j=1...i-1}} \rho^{i-j} \\
& = &  \frac{ \sigma^2}{(1 - \rho^2)} \sum_{i=1..n} (n-i) \rho^{i} 
\end{eqnarray}

At this stage, we can use rules about geometric series. We have 
\vspace{-0.1cm}

\begin{align}
\sum_{i=1}^{n} \rho^{i}  	& =\rho \sum_{i=0}^{n-1} \rho^{i}  	& \sum_{i=1}^{n} i \rho^{i} 	& =  \rho  \frac{\partial}{\partial \rho} \sum_{i=0}^{n} \rho^{i} \\
						& =\rho \frac{1-\rho^n}{1-\rho}		&   							& = \rho \frac{  1 - (n+1) \rho^{n} + n \rho^{n+1}}{(1-\rho)^2} 
\end{align}

This leads in particular to
\begin{align}
\sum_{i=1}^{n} (n-i ) \rho^{i} = \rho \frac{ n (1-\rho) - (1-\rho^{n}) }{(1-\rho)^2}
\end{align}

Hence,
\begin{eqnarray}
\text{Var}(\sqrt n (\bar{X}_n-\mu)) &= &\frac{ \sigma^2}{(1 - \rho^2)n} \left[ n + 2 \left( n  \rho \frac{1-\rho^{n}}{1-\rho} - \rho \frac{ 1- (n+1) \rho^{n} + n \rho^{n+1}}{(1-\rho)^2} \right) \right] \\
&=&  \frac{ \sigma^2}{(1 - \rho^2)n} \left[ n + 2 \left(  \frac{ n \rho (1  - \rho) - \rho (1-\rho^{n})}{(1-\rho)^2} \right) \right] \\
&=&  \frac{ \sigma^2}{1 - \rho^2} \left[ \frac{ 1 + \rho}{1-\rho} - \frac{2 \rho (1 -  \rho^{n}) }{n(1-\rho)^2}  \right] \\
&=&  \frac{ \sigma^2}{(1 - \rho)^2} \left[1 - \frac{2 \rho (1 -  \rho^{n}) }{n(1-\rho)(1+\rho)}  \right] 
\end{eqnarray}

\qed

\subsection{Variance of the sample mean in AR(1) process} \label{proof_covar}

The computation of \ref{lemma_covar_eq1} is easy and given by
\begin{eqnarray}
\text{Cov}(\bar{Y}_n, Y_j) & =&  \frac{1}{n} \mathbb{E}\left[ \sum_{i=1}^{n}Y_i Y_j \right]  \\
& = &   \frac{\sigma^2}{ n (1 - \rho^2)} \left[ \sum_{i=1}^{n} \rho^{  \vert i - j  \vert }  \right]  \\
& =& \frac{ \sigma^2}{ n (1 - \rho^2)} \left[ \sum_{i=0}^{n-j} \rho^{i} + \sum_{i=0}^{j-1} \rho^{i} -1  \right]   \\
&= & \frac{ \sigma^2}{ n (1 - \rho^2)} \left[ \frac{ 1-\rho^{n+1-j}} {1-\rho} + \frac{ 1-\rho^{j}} {1-\rho} -1  \right]  \\
&= & \frac{ \sigma^2}{ n (1 - \rho^2)} \left[ \frac{ 1 + \rho -\rho^{n+1-j}-\rho^{j}} {1-\rho}   \right]  
\end{eqnarray}

The second equation \ref{lemma_covar_eq2} is trivial as $$\bar{Y}_n = \frac{\sum_{i=1}^n Y_i}{n}.$$Hence
\begin{eqnarray}
\sum_{j=1}^n \text{Cov}(\bar{Y}_n, Y_j) =  n \text{Var}(\bar{Y}_n)  = \frac{ \sigma^2}{ (1 - \rho)^2} \left[ 1 - \frac{2 \rho (1 -  \rho^{n}) }{n(1-\rho)(1+\rho)}  \right] 
\end{eqnarray}

For the last equation, we can compute and get the result as follows

\begin{eqnarray}
\sum_{j=1}^n \left( \text{Cov}(\bar{Y}_n, Y_j) \right)^2 &=&  \sum_{j=1}^n  \frac{ \sigma^4}{n^2 (1 - \rho^2)^2} \left[ \frac{ 1 + \rho -\rho^{n+1-j}-\rho^{j}} {1-\rho}   \right]  ^2  \\
&=&   \frac{ \sigma^4}{n^2 (1 - \rho^2)^2 (1-\rho)^2} \sum_{j=1}^n  \left[  1 + \rho -\rho^{n+1-j}-\rho^{j}  \right]^2
\end{eqnarray}

Expanding the square leads to 
\begin{equation}
\sum_{j=1}^n  \left[  1 + \rho -\rho^{n+1-j}-\rho^{j}  \right]^2   =    \sum_{j=1}^n   ( 1 + \rho )^2 + (\rho^2)^{n+1-j} + (\rho^{2})^{j}- 2 (1+\rho) \rho^{n+1-j} 
- 2 (1+\rho) \rho^j   + 2 \rho^{n+1}   \\
\end{equation}

Denoting by $S$ the summation, computing the different terms and summing them up leads to

\begin{equation}
S  =  n \left( (1+\rho)^2 + 2 \rho^{n+1} \right) + 2 \rho^2 \frac{1-\rho^{2n}}{1-\rho^2} - 4 (1+\rho) \rho \frac{1-\rho^{n}}{1-\rho}
\end{equation}

since
\begin{equation}
 \sum_{j=1}^n \rho^{j}+\rho^{n+1-j} = 2 \rho \frac{1 - \rho^2}{1-\rho}
\end{equation}

Regrouping all the terms leads to

\begin{eqnarray*}
\sum_{j=1}^n \left( \text{Cov}(\bar{Y}_n, Y_j) \right)^2 & =&  \frac{ \sigma^4}{  (1 - \rho^2)^2}  \left[ \frac{ (1+\rho)^2 + 2 \rho^{n+1} } { (1-\rho)^2 } \frac{1}{n} 
-  \frac{4 (1+\rho)^2 \rho (1-\rho^n) - 2 \rho^2 (1-\rho^{2n})}  {(1-\rho)^2 (1-\rho^2)} \frac{1}{n^2} \right]
\end{eqnarray*}
\qed

\subsection{Expectation of denominator}\label{proof_denom_expectation}
Lemma \ref{lemma_var_num}) states that

\begin{eqnarray}
\mathbb{E} \left[ n \bar{Y}_n^2 \right]  = \frac{ \sigma^2}{1 - \rho^2}   \left( \frac{ 1 + \rho}{1-\rho} - \frac{2 \rho ( 1- \rho^{n}) }{n(1-\rho)^2}  \right)
\end{eqnarray}

We can compute as follows:

\begin{eqnarray}
\mathbb{E} {s_n^2} &= & \frac{1}{n-1} \mathbb{E} \left[ \sum_{i=1}^{n} Y_i^2 - n \bar{Y}_n^2 \right] \\
&=&    \frac{ \sigma^2}{(n-1)(1 - \rho^2)}   \left[ n -  \left( \frac{ 1 + \rho}{1-\rho} - \frac{2 \rho ( 1- \rho^{n}) }{n(1-\rho)^2}  \right) \right ] \\ 
&=&    \frac{ \sigma^2}{(n-1)(1 - \rho^2)}   \left[ (n - 1) -  \left( \frac{ 2 \rho }{1-\rho} - \frac{2 \rho (1-  \rho^{n}) }{n(1-\rho)^2}  \right) \right ] \\ 
&=&    \frac{ \sigma^2}{1 - \rho^2}   \left( 1 - \frac{ 2 \rho }{(1-\rho) (n-1)} ( 1 -\frac{ 1-\rho^{n} }{n (1-\rho)}  \right)  \\
&=&    \frac{ \sigma^2}{1 - \rho^2}   \left( 1 - \frac{ 2 \rho }{(1-\rho) (n-1)} + \frac{ 2 \rho( 1-\rho^{n}) }{n (n-1) (1-\rho)^2}  \right)  
\end{eqnarray}
\qed

\subsection{Second moment of denominator}\label{proof_second_moment}
We have 
\begin{eqnarray}
\mathbb{E} {s_n^4}& = & \frac{1}{(n-1)^2} \mathbb{E} \left[ (\sum_{i=1}^{n} Y_i^2 - n \bar{Y}_n^2 )^2\right] \\
 &= & \frac{1}{(n-1)^2} \mathbb{E} \left[ (\sum_{i=1}^{n} Y_i^2)^2 + n^2 \bar{Y}_n^4 -2 n   \bar{Y}_n^2 (\sum_{i=1}^{n} Y_i^2 ) \right] \\
 &= & \frac{1}{(n-1)^2} \mathbb{E} \left[ \sum_{i=1}^{n} Y_i^4 + 2 \sum_{i=1,k=i+1}^{n} Y_i ^2 Y_k^2 + n^2 \bar{Y}_n^4 -2 n   \bar{Y}_n^2 (\sum_{i=1}^{n} Y_i^2 ) \right] 
\end{eqnarray}

We can compute the fourth moment successively. Since both $Y_i$ and $\bar{Y}_n$ are two normal, their fourth moment is three times the squared variance. This gives:
\begin{eqnarray}
\mathbb{E} \left[ \sum_{i=1}^{n} Y_i^4  \right] &= & 3 n \times  \frac{ \sigma^4 }{(1 - \rho^2)^2} \\
\mathbb{E} \left[  n^2 \bar{Y}_n^4 \right] &= & 3 \times  \frac{  \sigma^4}{(1 - \rho^2)^2}  \left[ \frac{1+\rho}{1-\rho}-\frac{2 \rho (1-\rho^{n})}{n(1-\rho)^2} \right]^2 
\end{eqnarray}

The cross terms between $Y_i$ $Y_k$ is more involved and is computed as follows:
\begin{eqnarray}
\mathbb{E} \bigg[ \sum_{\substack{ i=1 \\ k = i+1}}^{n} Y_i ^2 Y_k^2  \bigg] &= & \mathbb{E} \bigg[ \sum_{\substack{ i=1 \\ k = i+1}}^{n} \rho^{2(k-i)} Y_i ^4  + (1- \rho^{2(k-i)}) Y_i^2 (Y_i^{\perp})^2  \bigg] \\
& =& \frac{ \sigma^4 }{(1 - \rho^2)^2}  \bigg[ \sum_{\substack{ i=1 \\ k = i+1}}^{n} 2 \rho^{2(k-i)} + 1  \bigg] \\
& =& \frac{ \sigma^4 }{(1 - \rho^2)^2}  \bigg[ 2 \sum_{i=1}^{n} (n-i) \rho^{2i} + \frac{ n (n-1)}{2}  \bigg] \\
& =& \frac{ \sigma^4 }{(1 - \rho^2)^2}  \bigg[ 2 \rho^2 \frac{ n (1-\rho^2) - (1-\rho^{2n}) }{(1-\rho^2)^2} +  \frac{ n (n-1)}{2}  \bigg] 
\end{eqnarray}

For the cross term between $\bar{Y}_n^2$ and $\sum_{i=1}^{n} Y_i^2 $, we wan use the fact that $\bar{Y}_n$ and $Y_i$ are two correlated Gaussians. 
Remember that for two Gaussians, $\mathbb{E}[U^2 V^2] = \mathbb{E}[U^2] \mathbb{E}[V^2] + 2 (\text{Cov}(U,V))^2$. We apply this trick to get:

\begin{eqnarray}
\mathbb{E} \left[ \bar{Y}_n^2 (\sum_{i=1}^{n} Y_i^2 ) \right] &= & \sum_{i=1}^{n} \mathbb{E} \left[ \bar{Y}_n^2 Y_i^2 \right]  \\
&= & \sum_{i=1}^{n} \mathbb{E} \left[ \bar{Y}_n^2 \right] \mathbb{E} \left[ Y_i^2 \right]  + 2 (\text{Cov}(   \bar{Y}_n, Y_i ))^2  \\
&= & \mathbb{E} \left[ \bar{Y}_n^2 \right]  \sum_{i=1}^{n} \mathbb{E} \left[ Y_i^2 \right]  + 2  \sum_{i=1}^{n}  (\text{Cov}( \bar{Y}_n, Y_i ))^2  
\end{eqnarray}

The first term is given by
\begin{eqnarray}
\mathbb{E} \left[ \bar{Y}_n^2 \right]  \sum_{i=1}^{n} \mathbb{E} \left[ Y_i^2 \right]  & = &  \frac{ \sigma^4}{ (1 - \rho^2)^2 } \left[ \frac{ 1 + \rho}{1-\rho} - \frac{2 \rho (1 -  \rho^{n}) }{n(1-\rho)^2}  \right]  
\end{eqnarray}

The second term is given by
\begin{align}
2  \sum_{i=1}^{n}  (\text{Cov}( \bar{Y}_n, Y_i ))^2  & = &   \frac{ 2 \sigma^4}{  (1 - \rho^2)^2}  \left[ \frac{ (1+\rho)^2 + 2 \rho^{n+1} } { (1-\rho)^2 } \frac{1}{n} 
-  \frac{4 (1+\rho)^2 \rho (1-\rho^n) - 2 \rho^2 (1-\rho^{2n})}  {(1-\rho)^2 (1-\rho^2)} \frac{1}{n^2} \right]
\end{align}

Summing up all quantities leads to

\begin{align}
(n-1)^2 \mathbb{E} {s_n^4} & = \frac{ \sigma^4}{  (1 - \rho^2)^2}  \bigg[ 3n 
+ n (n-1) + 4 \rho^2 \frac{ n (1-\rho^2) - (1-\rho^{2n}) }{(1-\rho^2)^2} 
+ 3 \left[ \frac{1+\rho}{1-\rho}-\frac{2 \rho (1-\rho^{n})}{n(1-\rho)^2} \right]^2 \\
& 
-2 n \left[ \frac{ 1 + \rho}{1-\rho} - \frac{2 \rho (1 -  \rho^{n}) }{n(1-\rho)^2} 
+  \frac{ (1+\rho)^2 + 2 \rho^{n+1} } { (1-\rho)^2 } \frac{2}{n} 
-  \frac{  (1+\rho)^2 \rho (1-\rho^n) - 2 \rho^2 (1-\rho^{2n})}  {(1-\rho)^2 (1-\rho^2)} \frac{8}{n^2} \right]
 \bigg] 
\end{align}

Regrouping all the terms leads to

\begin{align}
\mathbb{E}[s_n^4] & = \frac{\sigma^4}{(1-\rho^2)^2} \frac{1}{(n-1)^2} \left[ n^2-1 + \rho \left(n A_1 + A_2 + \frac{1}{n} A_3 + \frac{1}{n^2} A_4 \right) \right]
\end{align}

with 
\begin{align}
A_1 & = \frac{-4}{1-\rho^2} \\
A_2 & = \frac{- 2 \left(3 + 9 \rho + 11 \rho^2 + 3 \rho^3 + 6 \rho^n + 12 \rho^{n+1} + 6 \rho^{n+2}-2 \rho^{2n+2}\right)} {(1-\rho^2)^2} \\
A_3 & = \frac{ 4 (1 - \rho^n) (1 - 3 \rho + 4 \rho^2 - 8 \rho^{n+1})}{(1 -r)^3 (1 + r)} \\
A_4 & = \frac{12  \rho (1-\rho^{n})^2}{(1-\rho)^4} 
\end{align}

\qed

\subsection{Variance of denominator}\label{proof_var_moment}
The result is obtained from meticulously computing the variance knowing that

\begin{align}
\text{Var}[s_n^4] & = \mathbb{E}[s_n^4] - (\mathbb{E}[s_n^2] )^2
\end{align}

The terms for the part $ \mathbb{E}[s_n^4]$ have already been computed in proposition \ref{second_moment_denom}. As for the term coming from the square of the expectation, they write as:

\begin{align}
(\mathbb{E}[s_n^2] )^2 & =  \frac{ \sigma^4}{(1 - \rho^2)^2}   \left( 1 - \frac{ 2 \rho }{(1-\rho) (n-1)} + \frac{ 2 \rho( 1-\rho^{n}) }{n (n-1) (1-\rho)^2}  \right)^2 \\
& = \frac{ \sigma^4}{(n-1)^2 (1 - \rho^2)^2}   \left( (n-1) - \frac{ 2 \rho }{1-\rho} + \frac{ 2 \rho( 1-\rho^{n}) }{n (1-\rho)^2}  \right)^2
\end{align}

Let us write the square as $E1 = \left( (n-1) - \frac{ 2 \rho }{1-\rho} + \frac{ 2 \rho( 1-\rho^{n}) }{n (1-\rho)^2}  \right)^2 $. We can expand the square as follows

\begin{align}
E1 & =  (n-1)^2  + \frac{ 4 (n-1)  \rho( 1-\rho^{n}) }{n (1-\rho) } -  \frac{ 4 \rho   (n-1) }{1-\rho }+ \frac{ 2 \rho}{(1-\rho)^2} \left(\frac{ 1-\rho^{n}}{n(1-\rho)} -1 \right)^2  
\end{align}

Rearranging the terms leads then to the final result.
\qed

\clearpage

\bibliographystyle{jfe}
\bibliography{mybib}

\clearpage

\end{document}